\newtheorem{theorem}{Theorem}[section]
\newtheorem{lemma}[theorem]{Lemma}
\theoremstyle{definition}
\newtheorem{definition}[theorem]{Definition}
\newcommand{\lp}{\dashv}
\newcommand{\rp}{\vdash}
\theoremstyle{remark}
\numberwithin{equation}{section}
\begin{document}

\title{Existentially closed Leibniz algebras and an embedding theorem}




\author{Chia Zargeh}
\address{Instituto de Matem\'atica e Estat\'istica, Universidade de S\~ao Paulo, S\~ao Paulo, Brazil.}
\email{chia.zargeh@ime.usp.br}
\address{Departamento de Matem\'atica, Universidade Federal da Bahia, Salvador, BA, Brazil}

\subjclass[2010]{17A32, 17A36, 16S15}

\date{2019}

\keywords{Existentially closed, Leibniz algebras, HNN-extension}

\begin{abstract}
In this paper we introduce the notion of existentially closed Leibniz algebras. Then we use HNN-extensions of Leibniz algebras in order to prove an embedding theorem.
\end{abstract}

\maketitle

\specialsection*{Introduction}
The notion of algebraically closed was originally introduced for groups in a short paper of  W. R. Scott \cite{S1}. A group $G$ is said to be \emph{algebraically closed} if every finite set of equations and inequations which is consistent with $G$ already has a solution in $G$. Scott applied the concept of algebraically closed in order to provide important embedding theorems stating that every countable group $G$ can be embedded in a countable algebraically closed group $H$.
There exists a rich literature on the properties of existentially closed groups and their applications, an interested reader can see \cite{H2} and \cite{L4}. Having considered the properties of closure of algebraic systems in both existentially and algebraically senses, we can claim that they are equivalent concepts for groups and Lie algebras. We recall that an algebraic system $A$ is existentially closed, if every consistent finite set of existential sentences with parameters from $A$, is satisfiable in $A$.
Shahryari in \cite{S2} used the concept of existentially closed groups and Lie algebras to prove some embedding theorems. For instance, Shahryari showed that any Lie algebra $L$ can be embedded in a simple Lie algebra in such a way for any non-zero elements $a$ and $b$, there is $x$ such that $[x,a]=b$. 
\\In this work, we introduce the concept of existentially closed for Leibniz algebras which are a non-antisymmetric generalization of Lie algebras introduced by Bloh \cite{B1} and Loday \cite{L3}. We follow Shahryari's approach to provide an embedding theorem analogous to Lie algebras. Our main tool in this work is the HNN-extension of Leibniz algebras. It is worth pointing out that the concept of HNN-extension has been recently spread to some generalizations of Lie algebras, including Leibniz algebras, Lie superalgebras and Hom-generalization of Lie algebras in a series of papers \cite{L1}, \cite{L2} and \cite{S3}.
\\The paper is organized as follows. Section \ref{section1} is devoted to preliminary tools and the concept of existentially closed for the case of Leibniz algebras. In Section \ref{section2}, we recall the concept of HNN-extensions of Leibniz algebras with more details.  In Section \ref{section3}, we provide a theorem on embeddability of any Leibniz algebra in a solvable Leibniz algebra.

\section{Existentially closed Leibniz algebras}\label{section1}
A right Leibniz algebra  is defined as a vector space with a bilinear multiplication such that the right multiplication is a derivation.
Indeed, a Leibniz algebra is a vector space $L$ over a field $\mathbb{K}$ with some bilinear product $[- , -]$ which satisfies the Leibniz identity
\[
[[x,y],z]=[[x,z],y]+[x,[y,z]].
\]
Let $I$ be a subspace of a Leibniz algebra $L$. Then $I$ is a subalgebra if $[I, I] \subset I$, a left (resp. right) ideal if $[L, I] \subset I $ (resp. $[I, L] \subset I$). $I$ is an ideal of $L$ if it is both a left ideal and a right ideal. The Leibniz algebra $Leib(X)$ is called free
Leibniz algebra with a set of generators $X$ if, for any Leibniz algebra $L$, an arbitrary map $X \to L$ can be extended to an algebra homomorphism
$Leib(X) \to L$. Then $X$ is called the set of free generators of $Leib(X)$.

One way of obtaining Leibniz algebras is to use a dialgebra $D$. This is a vector space
equipped with two bilinear associative products $\dashv$ and $\vdash$, and the laws 
\begin{align*}
x\lp y\lp z&= x\lp(y\rp z)\\
(x\rp y)\lp z&= x\rp(y\lp z)\\
(x\lp y)\rp z&= x\rp y\rp z.
\end{align*}
If we define $[x,y]=x\lp y - y \rp x$, then $(D,[-,-])$ becomes a Leibniz algebra. 
\begin{definition}
Leibniz algebra $L$ (with $[L, L] = I$) is said to be simple if the only ideals of $L$ are $\{0\}$, $I$, $L$.
\end{definition}
\begin{definition}
A Leibniz algebra $L$ is called solvable if there exists $n \in \mathbb{N}$ such that $L^{[n]}=0$, where $L^{[1]}=L$, $L^{[s+1]}=[L^{[s]},L^{[s]}]$ for $s \geq 1$.
\end{definition}
The following description of existentially closed Leibniz algebras is provided based on the common definition of existentially closed algebras (see, e.g., \cite{K1}).
Let $Leib$ be the class of Leibniz algebras over field ${k}$. For $A,B \in Leib$, the notation $A \ast B$ stands for free product of $A$ and $B$ in $Leib$. If $L \in Leib$, then $ \Phi \in L \ast Leib(X)$ can be considered as an $L$-valued function on $L$. An equation of the form $\Phi (x) =0$ is \emph{solvable} over $L$ if there exists an extension $\bar{L}$ of $L$ such that the equation has a solution in $\bar{L}$. In the case of finding such a solution in $L$ itself then $\Phi(x)=0$ is said to be solvable in $L$.
\begin{definition}
A Leibniz algebra $L$ is called existentially closed if every system of equations which is solvable over $L$ is solvable in $L$.
\end{definition}

\section{HNN-extensions of Leibniz algebras}\label{section2}
The Higman-Neumann-Neumann extensions (HNN-extensions) for groups was already introduced in \cite{H1}.  
If $A$ is a subgroup of a group $G$ and $t\in G$, then the mapping $a \mapsto t^{-1}at$ is an isomorphism between the two subgroups $A$ and $t^{-1}At$  of $G$.
  The HNN construction tries to reverse the viewpoint.
 For a group $G$ with an isomorphism $\phi$ between two of its subgroups $A$ and $B$,  $H$ is an extension of $G$
  with an element $t \in H$ such that $t^{-1}at= \phi(a)$ for every $a \in A$. The group $H$ is presented by
\[H=\langle G,t \mid t^{-1}at=\phi (a),  \ \text{for all} \  a\in A \rangle\]
and it implies that $G$ is embedded in $H$. The HNN-extension  of a group possesses an important position in algorithmic group theory which has been used for the proof of the embedding theorem, namely, that every countable group is embeddable into a group with two generators.

Ladra \textit{et al.} \cite{L2} studied the same construction for Leibniz algebras (as well as their
associative relatives, the so-called dialgebras) and proved that every Leibniz algebra embeds into any of its HNN-extensions. The main difference
between the construction of HNN-extension for groups and algebras is that
the concepts of subgroups and isomorphism are replaced by subalgebras
and derivation, respectively. In other words, the derivation map
defined on a subalgebra is used instead of isomorphism between subgroups.
In this section we recall the notion of HNN-extension of Leibniz algebras. We note that the HNN-extension for Leibniz algebras has been constructed corresponding to both derivation and anti-derivation maps. 

\begin{definition}
A derivation of Leibniz algebras is defined in a similar way to the derivation of Lie algebras, that is, a linear map $d: L \to L $ satisfying \[d([x,y]=[d(x),y]+[x,d(y)],\]
for all $x,y \in L$.
\end{definition}
\begin{definition}
An \emph{anti-derivation} of Leibniz algebras is defined as a linear map $d^{\prime} \colon L \to L$ such that
\[
d^{\prime}([x,y])=[d^{\prime}(x),y]-[d^{\prime}(y),x]
\]
for $x,y \in L$.
\end{definition}

\subsection*{HNN-extensions of Leibniz algebras}
Let $L$ be a right Leibniz algebra and $A$ be a subalgebra. We assume that the derivation $d$ and anti-derivation $d^{\prime}$ are defined on a subalgebra $A$ instead of the whole $L$.  The HNN-extensions of the Leibniz algebra $L$ corresponding to the derivation $d$ and the anti-derivation $d^{\prime}$ are defined as follows, respectively: 
\begin{equation}\label{hnnderivation}
L_d^{\ast} := \langle L, t: d(a)=[a,t], \ a\in A \rangle,    
\end{equation}

and 
\begin{equation}\label{hnnantiderivation}
L_{d^{\prime}}^{\ast} := \langle L, t: d^{\prime}(a)=[t,a], \ a\in A \rangle.  
\end{equation}
Here $t$ is a new symbol not belonging to $L$. By this, a new generating letter $t$ is added to any presentation of $L$. There are two special cases of HNN-extensions of Leibniz algebras.
\begin{itemize}
  \item If $A=L$, then $d$ is a derivation of $L$ and $L_d^{\ast}$ is then the semidirect product of $L$ with a one-dimensional Leibniz algebra which acts on $L$ via $d$.
  \item If $A=0$, then $L_d^{\ast}$ is the free product  of $L$ with a one-dimensional  Leibniz algebra.
\end{itemize} 

If a Leibniz algebra $L$ has a presentation $\langle X \mid S \rangle$ in the class of Leibniz algebras, then we have a presentation $\langle X \mid S^{(-)} \rangle$ in the class of dialgebras, where $S^{(-)}$ is the set of polynomials obtained from $S$ by changing the brackets as 
\[ [x,y]  = x \dashv y - y \vdash x .\] 
Indeed, for any Leibniz algebra $L$, there exists a unique universal enveloping dialgebra $U(L)$. The next theorem can be considered as one of the applications of HNN-extensions of Leibniz algebras. 

\begin{theorem} \cite{L2} \label{hnn-extension}
Every Leibniz algebra embeds into its HNN-extension.
\end{theorem}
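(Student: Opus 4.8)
The plan is to reduce the statement to an embedding question for dialgebras, where associative-type normal form methods are available, and then transport the result back to Leibniz algebras through the universal enveloping dialgebra.

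First I would fix a presentation $L = \langle X \mid S \rangle$ in the class of Leibniz algebras and pass to the corresponding dialgebra presentation $U(L) = \langle X \mid S^{(-)} \rangle$, using the translation $[x,y] = x \lp y - y \rp x$ described above. Since the canonical map $L \to U(L)$ is injective, it suffices to prove that $U(L)$ embeds into the dialgebra obtained by adjoining $t$ subject to the HNN-relations. In the derivation case the relation $d(a) = [a,t]$ becomes $d(a) = a \lp t - t \rp a$, while in the anti-derivation case $d'(a) = [t,a]$ becomes $d'(a) = t \lp a - a \rp t$; the two are handled symmetrically, so I would concentrate on the first.

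Next I would fix a monomial order on the free dialgebra on $X \cup \{t\}$ and regard $S^{(-)}$ together with the family $\{\, a \lp t - t \rp a - d(a) : a \in A \,\}$ as a rewriting system, orienting each relation so that the leading monomial involving $t$ is expressed through lower monomials. The goal is to show this system is a Gr\"obner--Shirshov basis, i.e. that every composition (overlap ambiguity) reduces to zero modulo the system. Once confluence is established, the Composition--Diamond Lemma for dialgebras produces a linear basis of the HNN-extension consisting of normal monomials, and the normal monomials free of $t$ are exactly a basis of $U(L)$. Reading off linear independence shows that $U(L) \to U(L)_d^{\ast}$ is injective, and composing with $L \hookrightarrow U(L)$ gives the embedding $L \hookrightarrow L_d^{\ast}$.

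The main obstacle is the confluence check. The critical ambiguities arise when a product lying in $A$, say $[a,b]$ with $a,b \in A$, is multiplied by $t$: rewriting directly via the HNN-relation yields $d([a,b])$, whereas first expanding through the dialgebra relations (equivalently, through the Leibniz identity $[[a,b],t] = [[a,t],b] + [a,[b,t]]$) and then applying the HNN-relation yields $[d(a),b] + [a,d(b)]$. Agreement of the two reductions is precisely the derivation identity $d([a,b]) = [d(a),b] + [a,d(b)]$, so these compositions collapse exactly because $d$ is a derivation on $A$ (and symmetrically the anti-derivation identity settles $L_{d'}^{\ast}$). I expect the remaining effort to be the careful bookkeeping of how this single relation interacts with all three products $\lp$, $\rp$ and with left versus right multiplication by $t$, which is the technically delicate heart of the argument.
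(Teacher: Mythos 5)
Your proposal follows essentially the same route as the paper's (sketched) argument and the full proof in the cited reference \cite{L2}: pass to the universal enveloping dialgebra via the Poincar\'e--Birkhoff--Witt theorem for Leibniz algebras, build the HNN-extension at the dialgebra level, and verify via a Gr\"obner--Shirshov basis (Composition--Diamond lemma) that the normal monomials free of $t$ realize a basis of $U(L)$, with the derivation (resp.\ anti-derivation) identity being exactly what makes the critical compositions vanish. This is the intended argument, so no further comparison is needed.
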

The proof of the above theorem is based on the validity of Poincare-Birkhof-Witt theorem for Leibniz algebras which justifies the relation between construction of HNN-extension for dialgebras and HNN-extensions for Leibniz algebras. For an extensive proof see \cite{L2}.

\section{Embedding theorem}\label{section3}
In this section we provide an embedding theorem. To this end, we use the following lemma and theorem proved by Shahryari in \cite{S2}. In fact, both lemma and theorem  can be considered for an arbitrary non-associative algebra. 
\begin{lemma}\cite{S2}.
Let $\mathfrak{V}$ be an inductive class of algebras over a field $K$. Suppose $\mathfrak{V}$ is closed under subalgebra and $L \in \mathfrak{V}$. Then there exists an algebra $H \in \mathfrak{V}$ containing $L$ such that its dimension is at most 
\[ max \{\aleph_0, dim L, |K| \}. \]
Further, for any system $S$ of equations and in-equations over $L$, there exists the either of the following assertions:
\begin{itemize}
    \item [1-] $S$ has a solution in $H$
    \item [2-]  For any extension $H \subset E \in \mathfrak{V}$, the system $S$ has no solution in $E$.
\end{itemize}
\end{lemma}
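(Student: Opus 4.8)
The plan is to build $H$ as the union of a transfinite chain of members of $\mathfrak{V}$, each obtained from the previous by deciding a single system, controlling the dimension by invoking closure under subalgebras at every successor step and the inductive property at limits. Write $\lambda = \max\{\aleph_0, \dim L, |K|\}$. First I would bound the number of systems to be handled: every equation $\Phi(x)=0$ over $L$ is governed by an element of the free product $L \ast Leib(X)$ with $X$ a countable set of unknowns, and this free product has dimension at most $\lambda$; since $|K| \le \lambda$, its underlying set has cardinality at most $\lambda$, so there are at most $\lambda$ finite systems $S$ of equations and inequations with parameters in $L$. I then fix an enumeration $(S_\alpha)_{\alpha < \lambda}$ of all of them.

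Next I would define the chain $(H_\alpha)_{\alpha \le \lambda}$ by transfinite recursion with $H_0 = L$. At a successor ordinal $\alpha+1$ I distinguish two cases. If $S_\alpha$ has a solution in some extension $E \in \mathfrak{V}$ of $H_\alpha$, I let $H_{\alpha+1}$ be the subalgebra of $E$ generated by $H_\alpha$ together with the finitely many coordinates of one such solution; closure under subalgebras gives $H_{\alpha+1} \in \mathfrak{V}$, and since we adjoin only finitely many generators to an algebra of dimension at most $\lambda$, we still have $\dim H_{\alpha+1} \le \lambda$. Otherwise $S_\alpha$ has no solution in any extension of $H_\alpha$ inside $\mathfrak{V}$, and I set $H_{\alpha+1} = H_\alpha$. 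At a limit ordinal $\beta$ I put $H_\beta = \bigcup_{\alpha < \beta} H_\alpha$; because $\mathfrak{V}$ is inductive this union again lies in $\mathfrak{V}$, with dimension at most $\lambda \cdot \lambda = \lambda$. Setting $H = H_\lambda$ yields an algebra of $\mathfrak{V}$ containing $L$ of dimension at most $\lambda$.

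It then remains to verify the dichotomy for an arbitrary system $S = S_\alpha$. If the first case occurred at stage $\alpha$, the chosen solution already lies in $H_{\alpha+1} \subseteq H$, so $S$ has a solution in $H$ and assertion (1) holds. If instead the second case occurred, then $S$ had no solution in any extension of $H_\alpha$ within $\mathfrak{V}$; since every extension $E \in \mathfrak{V}$ of $H$ is a fortiori an extension of $H_\alpha$, the system $S$ has no solution in any such $E$, which is assertion (2).

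I expect the main obstacle to be the dimension bookkeeping rather than the logic of the dichotomy, which is essentially monotone. The delicate point is the L\"owenheim--Skolem--style reduction at successor steps: a witnessing extension $E$ may be arbitrarily large, and one must use closure under subalgebras to replace it by the small subalgebra generated by $H_\alpha$ and the solution, verifying both that this preserves membership in $\mathfrak{V}$ and that it respects the bound $\lambda$. Care is likewise needed to confirm that the recursion length $\lambda$ suffices to exhaust all relevant systems and that the dimension estimate remains stable under the $\lambda$-fold union at the final limit.
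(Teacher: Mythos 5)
Your proposal is correct and follows essentially the same route as the paper: a transfinite chain starting from $L$, enumerating all systems over $L$, adjoining the subalgebra generated by a witnessing solution when one exists in some extension (using closure under subalgebras), taking unions at limits (using inductiveness), and tracking the dimension bound throughout. Your version merely makes the successor/limit case split slightly more explicit than the paper's presentation.
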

\begin{proof}
 It is assumed that $X$ is a countable set of variables and 
 \[ \eta= max \{\aleph_{0}, dim L, |K| \}. \]
 Any equation over $L$ consists of finitely many elements of $L$ and $X$. The number of system of equations and in-equations over $L$ is $|L \cup K|$ and denoted by ${\kappa}$. Note that $|L|= max \{ dim L, |K| \}$, hence $\kappa=|L|+\aleph_{0}=\eta$. Let us consider a well-ordering in the set of all systems as $\{S_{\alpha}\}_{\alpha}$, using ordinals $0 \leq \alpha \leq \kappa$. Suppose $L_{0}=L$. 
 \\For any $0 \leq \gamma \leq \alpha$, the algebra $L_{\gamma} \in \mathfrak{V}$ is defined in such a way that $|L_{\gamma}|\leq \kappa$ and 
 \[ \beta \leq \gamma \Rightarrow L_{\beta} \subset L_{\gamma}. \]
 We put
 \[ E_{\alpha}= \bigcup_{\gamma \leq \alpha} L_{\gamma},\]
 so $E_{\alpha} \in \mathfrak{V}$ and, further, $|E_{\alpha}|\leq \alpha|L_{\gamma}|\leq {\kappa}^{2}=\kappa$. Suppose that $S_{\alpha}$ has no solution in any extension of $E_{\alpha}$. Then we set $L_{\alpha}=E_{\alpha}$. If there exists an extension $E_{\alpha} \subset E \in \mathfrak{V}$ such that $S_{\alpha}$ has a solution $(u_{1}, \dots, u_{n})$ in $E$, then we set $L_{\alpha}=\langle E_{\alpha},u_{1},\dots,u_{n} \rangle \subset E$. Since $\mathfrak{V}$ is closed under subalgebra, it follows that $L_{\alpha} \in \mathfrak{V}$ and we have 
 \[ |L_{\alpha}|=|E_{\alpha}| \leq \kappa .\]
 Now, we define 
 \[ H= \bigcup_{0 \leq \alpha \leq \kappa} L_{\alpha} \]
 which is an element of $\mathfrak{V}$. We have $|H|\leq {\kappa}^{2}=\kappa$, and hence 
 \[ max\{dim H, |K| \} \leq max \{ \aleph_{0}, dim L, |K| \},\]
therefore, we have $dim H \leq max \{ \aleph_{0}, dim L, |K| \}.$ 
\end{proof}

\begin{theorem}\cite{S2}
Let $\mathfrak{V}$ be an inductive class of algebras over field $K$. Suppose $\mathfrak{V}$ is closed under subalgebra and $L \in \mathfrak{V}$. Then there exists an algebra $L^\ast \in \mathfrak{V}$ with the following properties,
\begin{itemize}
    \item[1-] $L$ is a subalgebra of $L ^ \ast$.
    \item[2-] $L^\ast$ is existentially closed in the class $\mathfrak{V}$.
    \item[3-] $dim L^\ast \leq \{\aleph_0, dim L, |K| \}.$
\end{itemize}
\end{theorem}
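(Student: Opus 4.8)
The plan is to construct $L^\ast$ as the union of a countable ascending tower
\[ L = L_0 \subseteq L_1 \subseteq L_2 \subseteq \cdots, \]
where each $L_{n+1}$ is obtained from $L_n$ by a single application of the preceding Lemma. The Lemma already performs a complete sweep over \emph{all} systems over a fixed algebra at once, producing an extension in which every such system is either already solvable or solvable in no further extension; thus one application disposes of every system whose parameters lie in the current algebra. The reason a single step does not suffice is that passing from $L_n$ to $L_{n+1}$ introduces new elements, and hence new systems over $L_{n+1}$, which must then be handled at the next stage.

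Concretely, I set $L_0 = L$ and, given $L_n \in \mathfrak{V}$, invoke the Lemma to obtain $L_{n+1} \in \mathfrak{V}$ with $L_n \subseteq L_{n+1}$, with $\dim L_{n+1} \le \max\{\aleph_0, \dim L_n, |K|\}$, and enjoying the dichotomy: for every system $S$ over $L_n$, either $S$ has a solution in $L_{n+1}$, or $S$ has no solution in any extension of $L_{n+1}$ inside $\mathfrak{V}$. I then put
\[ L^\ast = \bigcup_{n \ge 0} L_n. \]
Since $\mathfrak{V}$ is inductive it is closed under unions of chains, so $L^\ast \in \mathfrak{V}$, and property (1) follows from $L = L_0 \subseteq L^\ast$. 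For property (3), write $\eta = \max\{\aleph_0, \dim L, |K|\}$; an easy induction on $n$ using the Lemma's bound gives $\dim L_n \le \eta$ for all $n$, and a countable union of algebras of dimension at most $\eta$ has dimension at most $\aleph_0 \cdot \eta = \eta$, so $\dim L^\ast \le \eta$.

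The essential point is property (2). I take an arbitrary system $S$ over $L^\ast$ that is solvable over $L^\ast$, say with a solution in some extension $E \in \mathfrak{V}$ of $L^\ast$. Because $S$ involves only finitely many parameters and the $L_n$ form a chain, all of these parameters already lie in a single $L_n$, so $S$ is in fact a system over $L_n$. As $L_{n+1} \subseteq L^\ast \subseteq E$, the algebra $E$ is an extension of $L_{n+1}$ in which $S$ has a solution; this excludes the second alternative of the Lemma's dichotomy at stage $n$, forcing the first. Hence $S$ is already solvable in $L_{n+1} \subseteq L^\ast$, which is precisely what existential closure of $L^\ast$ requires.

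I expect the main obstacle to be conceptual rather than computational, namely verifying that the two-case alternative produced by the Lemma is faithfully inherited along the tower: a system declared unsolvable in every extension of $L_{n+1}$ must not become solvable at some higher stage. This is exactly where the finiteness of the parameter set is decisive, since it guarantees that every system over $L^\ast$ descends to a genuine system over a finite stage $L_n$; and it is where one uses that any witnessing extension $E$ of $L^\ast$ is simultaneously an extension of $L_{n+1}$, so the Lemma's negative case cannot apply once a solution exists anywhere above.
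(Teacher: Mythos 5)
Your construction is exactly the paper's: iterate the Lemma countably many times starting from $L$ and take the union of the resulting chain. The argument is correct, and in fact you spell out the verification of existential closure (descent of a finite-parameter system to some stage $L_n$, followed by exclusion of the Lemma's negative alternative) that the paper leaves implicit.
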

\begin{proof}
 Let $H^{0}=L$ and $H^{1}=H$ be an algebra satisfying requirements of the previous lemma. Suppose $H^{m}$ is defined and let $H^{m+1}$ be an algebra obtained by the lemma from $H^{m}$. Then
 \[ dim H^{m+1} \leq max \{ \aleph_{0}, dim H^{m}, |K| \} = max \{ \aleph_{0}, dim L , |K|\}. \]
 Now, put
 \[ L^{\ast} = \bigcup_{m} H^{m}. \]
 Therefore, $L^{\ast}$ is an algebra which has the properties (1)-(3).
\end{proof}
We recall the notion of biderivation of Leibniz algebras  which has already been introduced in \cite{C1}. 
\begin{definition} 
Let $L$ be a Leibniz algebra. A biderivation of $L$ is a pair $(d,D)$ of $K$-linear maps $d,D \colon L \to L$ such that
\begin{equation}
    d([l,l^{\prime}])=[d(l),l^{\prime}]+[l,d(l^{\prime})],
\end{equation}
\begin{equation}
    D([l,l^{\prime}])=[D(l),l^{\prime}]-[D(l)^{\prime},l],
\end{equation}
\begin{equation}
    [l,d(l^{\prime})]=[l,D(l^{\prime})]
\end{equation}
for all $l,l^{\prime} \in L.$
\end{definition}
The set of all biderivations of $L$ is denoted by $Bider(L)$ which is a Leibniz algebra with the Leibniz bracket given by
\[ [(d_1,D_1),(d_2,D_2)]=(d_1d_2-d_2d_1,D_1d_2-d_2D_1).\]
As a quick example, let $l\in L$, then the pair $(ad(l),Ad(l))$ with $ad(l)(l^{\prime})=-[l^{\prime},l]$ and $Ad(l)(l^{\prime})=[l,l^{\prime}]$ for all $l^{\prime} \in L$, is a biderivation and $(ad(l),Ad(l))$ is called \emph{inner biderivation} of $L$. We use this concept during the proof of the next theorem.
\\On the basis of the properties of HNN-extensions of Leibniz algebras, we can provide the following embedding theorem. The proof of the theorem is similar to the case of Lie algebras.
\begin{theorem}\label{theorem1}
Let $L$ be a Leibniz algebra over field $K$. Then there exists a Leibniz algebra $L^{\ast}$ having the following properties:
\begin{itemize}
\item[1.] $L$ is a subalgebra of $L^{\ast}$.
\item[2.] For any nonzero $a,b, b^{\prime} \in L^{\ast}$, there exists $x,y \in L^{\ast}$ such that $[x,a]=b$ and $[a,y]=b^{\prime}$, and so $L^{\ast}$ is solvable. 
\item[3.] $dim L^{\ast} \leq max \{ {\aleph_{0}, dim L, |K}| \}.$
\item[4.] $L^{\ast}$ is not finitely generated.
\item[5.] Every finite-dimensional simple Leibniz algebra over field $K$ embeds in $L^{\ast}$.
\item[6.] If $K$ is finite and $A$ is finite-dimensional Leibniz algebra over $K$, then we have
\[ Bider(A) \cong \frac{N_{L^{\ast}(A)}}{C_{L^{\ast}(A)}} \]
\end{itemize}
\end{theorem}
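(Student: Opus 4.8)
The plan is to take $L^{\ast}$ to be the existentially closed Leibniz algebra containing $L$ produced by the preceding theorem, applied to $\mathfrak{V}=Leib$ (which is inductive and closed under subalgebras). With this single choice, properties (1) and (3) are immediate, since they are literally the conclusions of that theorem. Every remaining property is then reduced to the same mechanism: to force a configuration into $L^{\ast}$ it suffices to exhibit \emph{one} extension of $L^{\ast}$ in which the corresponding finite system of equations and inequations has a solution, and then to invoke existential closedness to obtain a solution already in $L^{\ast}$. The extensions will be supplied by the HNN-extension constructions \eqref{hnnderivation} and \eqref{hnnantiderivation} of Section \ref{section2}, whose legitimacy rests on Theorem \ref{hnn-extension}, guaranteeing that $L^{\ast}$ really embeds into each of them.

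For property (2), fix nonzero $a,b,b'\in L^{\ast}$. To solve $[x,a]=b$ I would put $A=\langle a\rangle$, define an anti-derivation $d'$ on $A$ with $d'(a)=b$, and pass to the extension $L^{\ast}_{d'}$ of \eqref{hnnantiderivation}; its stable letter satisfies $[t,a]=d'(a)=b$, so the equation is solvable over $L^{\ast}$ and hence, by existential closedness, in $L^{\ast}$. Symmetrically, $[a,y]=b'$ is addressed through the derivation HNN-extension \eqref{hnnderivation}, in which $[a,t]=d(a)=b'$. Letting $b$ (respectively $b'$) range over all nonzero elements shows that every nonzero element of $L^{\ast}$ is a bracket, i.e. $[L^{\ast},L^{\ast}]=L^{\ast}$; this perfectness is the sense in which I read the final clause of (2).

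Properties (5) and (6) follow the same pull-back principle. For (5), a finite-dimensional simple Leibniz algebra $S$ with basis $e_{1},\dots,e_{k}$ and structure constants $c_{ij}^{k}$ embeds into the free product $L^{\ast}\ast S$; an embedding of $S$ into $L^{\ast}$ is encoded by the finite system $[x_{i},x_{j}]=\sum_{k}c_{ij}^{k}x_{k}$ together with a single inequation ruling out the proper nonzero ideal $I=[S,S]$ of $S$ (so that the kernel of $e_{i}\mapsto x_{i}$, being an ideal of the simple algebra $S$, must be $\{0\}$). Existential closedness solves this system in $L^{\ast}$. For (6), assume $K$ finite and $A$ finite-dimensional. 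Finiteness of $K$ lets me express linear independence of a basis of $A$ by finitely many inequations, so the embedding argument of (5), now with linear-independence inequations in place of the simplicity trick, already places a copy of $A$ inside $L^{\ast}$. Consider then the homomorphism $\Psi\colon N_{L^{\ast}}(A)\to Bider(A)$ sending $u$ to the restriction $(ad(u)|_{A},Ad(u)|_{A})$ of its inner biderivation; its kernel is exactly $C_{L^{\ast}}(A)$, so $\Psi$ factors through an injection $N_{L^{\ast}}(A)/C_{L^{\ast}}(A)\hookrightarrow Bider(A)$. Surjectivity is the crucial point: given $(d,D)\in Bider(A)$, the biderivation HNN-extension of $L^{\ast}$, whose consistency is ensured by the compatibility axiom of a biderivation, contains a stable letter inducing $(d,D)$ on $A$; the defining relations $[a_{i},t]=-d(a_{i})$, $[t,a_{i}]=D(a_{i})$ over a basis $a_{1},\dots,a_{m}$ of $A$ (signs matching the conventions for $ad$ and $Ad$) form a finite system, which existential closedness solves inside $L^{\ast}$, yielding $u\in N_{L^{\ast}}(A)$ with $\Psi(u)=(d,D)$. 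The first isomorphism theorem then gives $Bider(A)\cong N_{L^{\ast}}(A)/C_{L^{\ast}}(A)$.

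Property (4) would follow from a genericity argument: adjoining a free generator $z$ via the free product $L^{\ast}\ast Leib(z)$ yields an element subject to no bracket-relation over any prospective finite generating set $a_{1},\dots,a_{n}$, so that realizing its existential data inside $L^{\ast}$, as existential closedness demands, is incompatible with $L^{\ast}=\langle a_{1},\dots,a_{n}\rangle$. The main obstacle, however, lies in property (2): one must verify that the prescribed maps $d'$ and $d$ are genuinely well-defined (anti-)derivations of $\langle a\rangle$, i.e. that assigning $a\mapsto b$ and $a\mapsto b'$ is compatible with every relation already holding in $\langle a\rangle$. Here the derivation side is delicate, because the universal identity $[a,[a,a]]=0$ and, when $[a,a]=0$, the forced consequence $[a,b']+[b',a]=0$ of the Leibniz identity constrain the admissible values of $d(a)$; controlling these consistency conditions, rather than the routine pull-back steps, is where the real work of the theorem concentrates.
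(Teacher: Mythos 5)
Your overall strategy coincides with the paper's: take $L^{\ast}$ to be the existentially closed algebra supplied by the preceding theorem applied to the class of all Leibniz algebras (giving (1) and (3)), and for each remaining property exhibit one extension of $L^{\ast}$ --- an HNN-extension via Theorem \ref{hnn-extension}, a direct product, or a free product --- in which the relevant finite system of equations and inequations is solvable, then pull a solution back into $L^{\ast}$ by existential closedness. Items (2), (5) and (6) are handled essentially as in the paper; in fact you flag two points the paper passes over silently, namely that the assignment $a\mapsto b$ must genuinely extend to a well-defined (anti-)derivation of $\langle a\rangle$ (on the derivation side this is a real constraint: if $[a,a]=0$ one needs $[b',a]+[a,b']=0$), and that surjectivity of $N_{L^{\ast}}(A)\to Bider(A)$ requires a \emph{single} element inducing both components of a biderivation, whereas the paper's two separate HNN-extensions \eqref{hnnderivation} and \eqref{hnnantiderivation} only produce separate elements $x$ and $y$. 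Your proposed fix via a combined ``biderivation HNN-extension'' is plausible but is not a construction established in Section \ref{section2} or in \cite{L2}, so it would need its own justification.

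The genuine gap is in your argument for (4). ``$z$ satisfies no bracket-relation over $a_{1},\dots,a_{n}$'' is an infinite conjunction of inequations, one for each Leibniz polynomial, and existential closedness only transfers \emph{finite} systems; moreover, a solution of any finite subsystem found inside $L^{\ast}$ tells you nothing about whether that solution lies outside $\langle a_{1},\dots,a_{n}\rangle$, since membership in a subalgebra is not itself an equational condition. The paper's route avoids this entirely: for any finite $x_{1},\dots,x_{n}\in L^{\ast}$ the finite system $[x,x_{i}]=0$, $[x_{i},x]=0$, $x\neq 0$ has a solution in the direct product $L^{\ast}\times\langle x\rangle$, hence in $L^{\ast}$ itself, so the two-sided centralizer of any finite subset is nonzero; if $L^{\ast}$ were generated by $x_{1},\dots,x_{n}$, that centralizer would be the centre of $L^{\ast}$ (an easy consequence of the Leibniz identity), and a nonzero central element $c$ contradicts (2), because $[x,c]=b$ is then unsolvable for $b\neq 0$. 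You should replace your genericity argument with this one. (As a side remark, the clause ``and so $L^{\ast}$ is solvable'' in (2) cannot be right as stated; your reading that (2) forces $[L^{\ast},L^{\ast}]=L^{\ast}$, the opposite of solvability, is correct.)
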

\begin{proof}
 Let $\mathfrak{V}$ be the class of all Leibniz algebras. Theorem \ref{theorem1} implies that there exists an existentially closed Leibniz algebra $L^{\ast}$ containing $L$ such that 
 \[ dim L^{\ast} \leq max \{ \aleph_{0}, dim L, |K| \}. \] 
 Let $0 \neq a,b \in L^{\ast}$. Let $d~ \text{and}~ d^{\prime}: \langle a \rangle \to L$ be a derivation and an anti-derivation, respectively, and $d(a)=b$ and $d^{\prime}(a)=b^{\prime}$. Let consider HNN-extensions \ref{hnnderivation} and \ref{hnnantiderivation} of Leibniz algbera $L$. Then the  embeddability theorem \ref{hnn-extension} implies that $L$ embeds in both HNN-extensions. Therefore, the equations $[x,a]=b$ and $[a,x]=b^{\prime}$ have solutions in $L^{\ast}_{d}$ and $L^{\ast}_{d^{\prime}}$, respectively, so $2$ is proved. 
 \\Let suppose $x_1,\dots,x_n$ be a finite set of elements of Leibniz algebra $L^{\ast}$ and consider the following systems of equations 
 \[ [x,x_i]=0, ~ [x_i,x]=0, \]  where $1\leq i \leq n, ~ x \neq 0.$
 These systems have solutions in  the Leibniz algebra $L^{\ast} \times \langle x \rangle$, and so we have $C_{L^{\ast}} (\langle x_1,\dots,x_{n} \rangle ) \neq 0$. Therefore, $L^{\ast}$ is not finitely generated.
 \\Suppose $H$ is a finite-dimensional simple Leibniz algebra with basis $u_1,\dots,u_n$ with $[u_i,u_j]=\sum_{r} \lambda_{ij}^{r} u_{r}$. Let consider the system 
 \[ [x_i,x_j]=\sum_{r} \lambda_{ij}^{r} x_{r} \]
 for $1 \leq i,j \leq n$, $x_{i} \neq 0$ where $1 \leq i \leq n$. This system has a solution in $L^{\ast} \times H$ and so there is a nonzero homomorphism $H \to L^{\ast}$ and $H$ embeds in $L^{\ast}$.
 
To prove $6$, let $K$ be finite and $A$ be finite-dimensional subalgebra of $L^{\ast}$. Let $d$ and $d^{\prime}$ be derivation and anti-derivation maps, respectively. Let consider HNN-extensions corresponding to both derivation and anti-derivation 
\[ L_d^{\ast} := \langle L, t: d(a)=[a,t], \ a\in A \rangle,\]
and 
\[ L_{d^{\prime}}^{\ast} := \langle L, t: d^{\prime}(a)=[t,a], \ a\in A \rangle ,\]
in which the system $[a,x]=d(a)$ and $[y,a]=d^{\prime}(a)$ have solutions. Therefore, there are $x,y \in L^{\ast}$ such that $d(a)=[a,x]$ and $d^{\prime}(a)=[y,a]$ for all $a \in A$, so $x$ is in the left normalizer and $y$ is in the right normalizer. Let $N_{L^{\ast}}(A)$ be the normalizer of $A$ which is the intersection of left and right normalizer. Therefore, there is an epimorphism $N_{L^{\ast}}(A) \to Bider(L)$ with the kernel $C_{L^{\ast}}(A)$ and we have 
\[ Bider(A) \cong \frac{N_{L^{\ast}(A)}}{C_{L^{\ast}(A)}}. \]
\end{proof}

\section*{Acknowledgments}
The author was supported by CAPES postdoctoral scholarship (88887 318997/2019-00) and CNPq (152453/2019-9).

\bibliographystyle{amsplain}

\end{document}